\documentclass[12pt]{amsart}
\usepackage{amscd,amsmath,amsthm,amssymb,graphics}
\usepackage{pstcol,pst-plot,pst-3d}%\usepackage[T1]{fontenc}
\usepackage{lmodern,pst-node}%\usepackage{geometric}
\usepackage{multicol}
\usepackage{epic,eepic}
\usepackage{amsfonts,amssymb,amscd,amsmath,enumerate,verbatim}
\psset{unit=0.7cm,linewidth=0.8pt,arrowsize=2.5pt 4}
% for vertex a circle with radius 0.5 mm

% for fat lines
\newpsstyle{fatline}{linewidth=1.5pt}
\newpsstyle{fyp}{fillstyle=solid,fillcolor=verylight}
\definecolor{verylight}{gray}{0.97}
\definecolor{light}{gray}{0.9}
\definecolor{medium}{gray}{0.85}
\definecolor{dark}{gray}{0.6}

%    Absolute value notation

%    Blank box placeholder for figures (to avoid requiring any
%    particular graphics capabilities for printing this document).

\unitlength=0.7cm

%
%------    GENERAL MACROS    -----
%
% Standard rings and fields, affine and projective space
%
               % the font for N,Z,Q,R,C

%
%------------------------------------------------
% Symbols in "Fraktur"
%
\def\frk{\frak}               % font for "Fraktur"
\newcommand{\fm}{\frak{m}}
\newcommand{\fp}{\frak{p}}

\def\mm{{\frk m}}

\def\Phi{{\frk n}}
\def\Phi{{\frk N}}
%
%------------------------------------------------

% Small letters in bold
%

%
\def\opn#1#2{\def#1{\operatorname{#2}}} % to make operators
%------------------------------------------------
% Numerical invariants of rings, ideals, and modules
%
\opn\chara{char} \opn\length{\ell} \opn\pd{pd} \opn\rk{rk}
\opn\projdim{proj\,dim} \opn\injdim{inj\,dim} \opn\rank{rank}
\opn\depth{depth} \opn\grade{grade} \opn\height{height}
\opn\embdim{emb\,dim} \opn\codim{codim}

\opn\Tr{Tr} \opn\bigrank{big\,rank}
\opn\superheight{superheight}\opn\lcm{lcm}
\opn\trdeg{tr\,deg}%\emph{
\opn\reg{reg} \opn\lreg{lreg} \opn\ini{in} \opn\lpd{lpd}
\opn\size{size}\opn\bigsize{bigsize}
\opn\cosize{cosize}\opn\bigcosize{bigcosize}
\opn\sdepth{sdepth}\opn\sreg{sreg}
\opn\link{link}\opn\fdepth{fdepth}
%------------------------------------------------
% Divisors
%
\opn\div{div} \opn\Div{Div} \opn\cl{cl} \opn\Cl{Cl}
%
%------------------------------------------------
% Subsets of the spectrum of a ring
%
\opn\Spec{Spec} \opn\Supp{Supp} \opn\supp{supp} \opn\Sing{Sing}
\opn\Ass{Ass} \opn\Min{Min}\opn\Mon{Mon} \opn\dstab{dstab} \opn\astab{astab}
\opn\Syz{Syz}
%
%------------------------------------------------
% Standard operations on ideals and modules
%
\opn\Ann{Ann} \opn\Rad{Rad} \opn\Soc{Soc}
%
%------------------------------------------------
% Linear algebra and homology, endo- and automorphisms
%
\opn\Im{Im} \opn\Ker{Ker} \opn\Coker{Coker} \opn\Am{Am}
\opn\Hom{Hom} \opn\Tor{Tor} \opn\Ext{Ext} \opn\End{End}
\opn\Aut{Aut} \opn\id{id}

\opn\nat{nat}
\opn\pff{pf}%   \pf exists already
\opn\Pf{Pf} \opn\GL{GL} \opn\SL{SL} \opn\mod{mod} \opn\ord{ord}
\opn\Gin{Gin} \opn\Hilb{Hilb}\opn\sort{sort}
\opn\initial{init}
\opn\ende{end}
\opn\height{height}
\opn\type{type}
%
%------------------------------------------------
% Convexity
%
\opn\aff{aff} \opn\con{conv} \opn\relint{relint} \opn\st{st}
\opn\lk{lk} \opn\cn{cn} \opn\core{core} \opn\vol{vol}
\opn\link{link} \opn\star{star}\opn\lex{lex}
%------------------------------------------------
% Graded rings and Rees algebras
\opn\gr{gr}

%
%------------------------------------------------
% Polynomials and power series
%

\def\pot#1#2{#1[\kern-0.28ex[#2]\kern-0.28ex]}

%
%------------------------------------------------
% Direct and inverse limits
%
\opn\dirlim{\underrightarrow{\lim}}
\opn\inivlim{\underleftarrow{\lim}}
%
%
% Names with a meaning
%
\let\union=\cup
\let\sect=\cap

%
%------------------------------------------------
%
\let\to=\rightarrow

\def\Implies{\ifmmode\Longrightarrow \else
        \unskip${}\Longrightarrow{}$\ignorespaces\fi}
\def\implies{\ifmmode\Rightarrow \else
        \unskip${}\Rightarrow{}$\ignorespaces\fi}
\def\iff{\ifmmode\Longleftrightarrow \else
        \unskip${}\Longleftrightarrow{}$\ignorespaces\fi}

\let\:=\colon
\newtheorem{Theorem}{Theorem}[section]
 
 \newtheorem{Corollary}[Theorem]{Corollary}
 
 \newtheorem{Remark}[Theorem]{Remark}
 
 \newtheorem{Example}[Theorem]{Example}

%
% We like the var forms of some greek letters (as taught in German schools)
%
\let\epsilon\varepsilon
\let\kappa=\varkappa
%
%           We print on A4 paper
%
\textwidth=15cm \textheight=22cm \topmargin=0.5cm
\oddsidemargin=0.5cm \evensidemargin=0.5cm \pagestyle{plain}
%
%           The pf environment of AMSART needs a little help
%
\def\qed{\ifhmode\textqed\fi
      \ifmmode\ifinner\quad\qedsymbol\else\dispqed\fi\fi}
\def\textqed{\unskip\nobreak\penalty50
       \hskip2em\hbox{}\nobreak\hfil\qedsymbol
       \parfillskip=0pt \finalhyphendemerits=0}
\def\dispqed{\rlap{\qquad\qedsymbol}}

%
% ------    END OF GENERAL MACROS    -------
\opn\dis{dis}
\def\pnt{{\raise0.5mm\hbox{\large\bf.}}}

\opn\Lex{Lex}

%-- macro for local cohomology-----------------------------

%-- macro for a complicated condition for the extended
%-- Hochster's formula

\begin{document}
 \title{Stability properties of powers of ideals in regular local rings of small dimension}

 \author {J\"urgen Herzog and Amir Mafi}

\address{J. Herzog, Fachbereich Mathematik
Universit\"at Duisburg-Essen, Campus Essen, 45117 Essen, Germany}
\email{juergen.herzog@uni-essen.de}

\address{A. Mafi, Department of Mathematics, University of Kurdistan, P.O. Box: 416, Sanandaj,
Iran.}
\email{a\_mafi@ipm.ir}

\subjclass[2010]{13A15, 13A30, 13C15.}

\keywords{Associated primes, depth stability number.}

\begin{abstract}
 Let $(R,\mm)$ be a regular local ring  or a polynomial ring  over a field,  and let $I$ be an ideal of $R$ which we assume to be graded if $R$ is a
 polynomial ring. Let $\astab(I)$ resp.\ $\overline{\astab}(I)$ be the smallest integer $n$ for which
  $\Ass(I^n)$ resp.\ $\Ass(\overline{I^n})$ stabilize, and  $\dstab(I)$ be  the smallest integer $n$  for which $\depth(I^n)$ stabilizes. Here $\overline{I^n}$ denotes the integral closure of $I^n$.

 We show that  $\astab(I)=\overline{\astab}(I)=\dstab(I)$  if $\dim R\leq 2$, while already in dimension $3$, $\astab(I)$ and $\overline{\astab}(I)$ may differ by any amount. Moreover, we show that if  $\dim R=4$, then there exist ideals $I$ and $J$ such that for any positive integer $c$ one has $\astab(I)-\dstab(I)\geq c$ and   $\dstab(J)-\astab(J)\geq c$.
\end{abstract}

\maketitle

\section*{Introduction}
Let $(R,\mm)$ be a commutative Noetherian ring and $I$ be an ideal of $R$. Brodmann \cite{B1} proved that the set of associated prime ideals $\Ass(I^k)$
stabilizes. In other words, there exists an integer $k_0$ such that $\Ass(I^k)=\Ass(I^{k_0})$ for all $k\geq k_0$. The smallest such integer $k_0$ is
called the {\em index of Ass-stability}  of $I$,  and denoted by $\astab(I)$. Moreover,   $\Ass(I^{k_0})$ is called the {\em stable set of associated
prime ideals} of $I$. It is denoted by $\Ass^{\infty}(I)$. For the integral closures $\overline{I^k}$ of the powers of $I$,  McAdam and Eakin \cite{Me}
showed that  $\Ass({\overline{I^k}})$ stabilizes as well. We denote the index of stability for the integral closures of the powers of $I$ by
$\overline{\astab}(I)$,  and denote its   stable set of associated prime ideals by $\overline{\Ass}^{\infty}(I)$.

Brodmann \cite{B} also showed that  $\depth R/I^k$ stabilizes. The  smallest power of $I$  for which  depth stabilizes is  denoted by $\dstab(I)$. This
stable depth is called the {\em limit depth} of $I$, and is denoted  by $\lim_{k\to\infty}\depth R/I^k$. These  indices  of stability have been  studied
and  compared to some extend in \cite{Hq} and \cite{Hrv}. The purpose of this work is to compare once  again these  stability indices. The main result is
that if $(R,\mm)$ is a regular local ring with $\dim R\leq 2$, then  all 3 stability indices are equal, but  if $\dim R=3$, then
we still have $\astab(I)=\dstab(I)$, while  $\astab(I)$ and $\overline{\astab}(I)$ may differ by any amount. On the other hand, if $\dim R\geq 4$, we will
show by examples that in general a comparison between these stability indices is no longer possible. In other words, any inequality between these
invariants may occur.

Quite often, but not always,  $\depth(R/I^k)$ is a non-increasing function on $n$. In the last section  we prove that if $(R,\mm)$ is a $3$-dimensional
regular local ring and $I$ satisfies $I^{k+1}:I=I^k$ for all $k$, then $\depth R/I^k$ is non-increasing.
For any unexplained notion or terminology, we refer the reader to \cite{Bh}.

Several explicit example were  performed with help of the computer algebra systems CoCoA \cite{Ab} and Macaulay2 \cite{Gs},  as well as with the program in
\cite{Bhr} which allows to compute $\Ass^\infty(I)$ of a monomial ideal $I$.

\section{The  case $\dim R\leq 3$}

In this section we study the behavior of the stability indices for regular rings of dimension~$\leq 3$. In the proofs we will use the following elementary
and well-known fact: let $I\subset R$ be an ideal of height $1$ in the regular local ring $R$. Then there exists $f\in R$ such that $I=fJ$ where either
$J=R$ or otherwise $\height J>1$. Indeed, let $I=(f_1,\ldots,f_m)$. Since $R$ is factorial,  the greatest common   divisor  of $f_1,\ldots,f_m$ exists.
Let $f=\gcd(f_1,\ldots,f_m)$, and $g_i=f_i/f$ for $i=1,\ldots,m$. Then $I=fJ$, where $J=(g_1,\ldots,g_m)$. Suppose $\height J=1$, then there exist a prime
ideal $P$ of height 1 with $J\subset P$. Since $R$ is regular, $P$ is a principal ideal, say $P=(g)$. It follows then that $g$ divides all $g_i$, but
$\gcd(g_1,\ldots,g_m)=1$, a contradiction.

%For an ideal $I$ we denote by $\ell(I)$ the analytic spread if $I$,  that is,   the Krull dimension of the fiber $R[It]/\mm  R[IT]$ of the Rees ring
%$$R[It]=R\dirsum It\dirsum I^2t^2\dirsum\cdots $.

\medskip
We first observe
\begin{Remark}
\label{r}
Let $(R,\mm)$ be a regular local ring with $\dim R\leq 2$ and let  $I$ be an ideal of $R$. Then
\[
\astab(I)=\overline{\astab}(I)=\dstab(I)=1.
\]
\end{Remark}

\begin{proof}
If $\dim R\leq 1$, then either $R$ is a field or a principal ideal domain, and the statement is trivial.  Now suppose that $\dim R=2$ and $I\neq 0$. If
$\height I=2$, then $\mm$ belongs to $\Ass(I^k)$ and $\Ass(\overline{I^k})$  for all $k$, and the assertion is trivial.  Hence, we may  assume that
$\height(I)=1$. Then $I=fJ$ with $J=R$ or $\height J=2$. In the first case $I$ is a principal ideal, and the assertion is trivial. In the second case,
$I^k=f^kJ^k$ for all $k$, and $J^k$ is $\mm$-primary. Thus there exists $g\not \in J^k$ with $g\mm\in J^k$. Then $gf^k\not\in f^kJ^k$ and   $gf^k\mm\in
f^kJ^k$. This shows that in the second case $\mm\in \Ass(I^k)$ for $k$, so that $ \astab(I)=\dstab(I)=1$.

Finally observe that in the second case,  $\overline{I^k}=f^k\overline{J^k}$ for all $k$. This shows that $\mm\in \Ass(\overline{I^k})$ for all $k$, so
that also  in this case  $\overline{\astab}(I)=1$.
\end{proof}

\begin{Theorem}
\label{leq3}
Let $(R,\mm)$ be a regular local ring with $\dim R\leq 3$ and $I$ be an ideal of $R$. Then  $\astab(I)=\dstab(I)$.
\end{Theorem}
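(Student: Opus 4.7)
By Remark~\ref{r} the assertion already holds when $\dim R\le 2$, so I focus on $\dim R=3$ and split by the height of $I$. The case $\height I = 3$ is immediate: $I$ is $\mm$-primary, $R/I^k$ is Artinian, so $\Ass(R/I^k) = \{\mm\}$ and $\depth R/I^k = 0$ for every $k$, whence $\astab(I) = \dstab(I) = 1$.

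Next I would reduce $\height I = 1$ to the case $\height I \ge 2$. Using the factorization $I = fJ$ recalled in the opening of this section, either $J = R$ (so $I$ is principal, $R/(f^k)$ is Cohen--Macaulay of dimension~$2$, and both invariants equal~$1$), or $\height J \ge 2$. In the latter case multiplication by $f^k$ gives an exact sequence
\[
0\to R/J^k\to R/I^k\to R/(f^k)\to 0.
\]
Since $\depth R/(f^k) = 2$ while $\depth R/J^k\le 1$ and $\depth R/I^k\le 2$, two applications of the depth lemma yield $\depth R/I^k = \depth R/J^k$. Moreover, every prime in $\Ass(R/J^k)$ has height $\ge 2$ whereas $\Ass(R/(f^k)) = \Min(f)$ consists of height-$1$ primes, so the same sequence produces the disjoint union
\[
\Ass(R/I^k) = \Ass(R/J^k)\sqcup\Min(f),
\]
and $\Min(f)$ is independent of $k$. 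Hence $\Ass(R/I^k)$ reaches its stable value exactly when $\Ass(R/J^k)$ does, so $\astab(I) = \astab(J)$ and $\dstab(I) = \dstab(J)$, leaving the case $\height I \ge 2$.

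The essential case is $\height I = 2$. Here $\dim R/I^k = 1$, so $\depth R/I^k\in\{0,1\}$, and since $\dim R = 3$ the only prime of $R$ that can appear as an embedded associated prime of $I^k$ is $\mm$. Combined with $\Min(I)\subseteq\Ass(R/I^k)$, this forces $\Ass(R/I^k)$ to be either $\Min(I)$ or $\Min(I)\cup\{\mm\}$. Because $\depth_R M = 0$ is equivalent to $\mm\in\Ass M$ for any finitely generated $R$-module $M$, at each $k$ the set $\Ass(R/I^k)$ equals $\Min(I)\cup\{\mm\}$ (resp.\ $\Min(I)$) exactly when $\depth R/I^k = 0$ (resp.\ $1$). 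The two sequences $k\mapsto\Ass(R/I^k)$ and $k\mapsto\depth R/I^k$ therefore encode the same information and must stabilize at the same index, giving $\astab(I) = \dstab(I)$.

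The only bookkeeping that requires real care is the depth-lemma step together with the height-based disjointness in the $\height I = 1$ reduction; the substance of the theorem is the final dichotomy, that in a $3$-dimensional regular local ring the presence of $\mm$ in $\Ass(R/I^k)$ and the vanishing of $\depth R/I^k$ are literally the same condition for a height-$2$ ideal, leaving no other freedom in $\Ass(R/I^k)$ as $k$ varies.
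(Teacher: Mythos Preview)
Your proof is correct and follows essentially the same architecture as the paper's: reduce to $\dim R=3$, handle $\height I\ge 2$ via the dichotomy $\Min(I)\subseteq\Ass(R/I^k)\subseteq\Min(I)\cup\{\mm\}$ together with $\depth R/I^k=0\iff\mm\in\Ass(R/I^k)$, and reduce $\height I=1$ to $\height J\ge 2$ via $I=fJ$.

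The one genuine technical difference is in the height-$1$ reduction. The paper argues that multiplication by $f^k$ gives an $R$-module isomorphism $J^k\cong I^k$, so $\projdim R/I^k=\projdim R/J^k$ and Auslander--Buchsbaum yields $\depth R/I^k=\depth R/J^k$; for $\Ass$ it localizes directly, distinguishing primes containing $J$ from those not containing $J$. You instead run the short exact sequence $0\to R/J^k\to R/I^k\to R/(f^k)\to 0$ and use the depth lemma plus height-separation of associated primes. Both routes are equally short; yours has the mild advantage of extracting the depth equality and the $\Ass$ decomposition from a single exact sequence. One small point you leave implicit: the inclusion $\Min(f)\subseteq\Ass(R/I^k)$ does not come for free from the exact sequence (only the containment $\Ass(R/I^k)\subseteq\Ass(R/J^k)\cup\Min(f)$ does); it follows because every $P\in\Min(f)$ has height $1$, hence $P\in\Min(I)\subseteq\Ass(R/I^k)$, or equivalently because $J_P=R_P$ forces $(R/I^k)_P=(R/(f^k))_P$.
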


\begin{proof}
 By Remark~\ref{r}, we may  assume that $\dim R=3$. If $\height I\geq 2$, then $\Ass(I^k)\subseteq\Min(I)\cup\{\mm\}$ for all $k$. This implies at once that
 $\astab(I)=\dstab(I)$. Now suppose that $\height I=1$.  If $I$ is a principal ideal, then the assertion is again trivial.  Otherwise,  $I=fJ$ with
 $\height J\geq 2$.  Since $I^k$ is isomorphic to $J^k$ as an $R$-module, it follows that $\projdim  I^k=\projdim  J^k$ for all $k$. This implies that
 $\projdim  R/I^k=\projdim R/ J^k$ for all $k$, and consequently $\depth R/I^k=\depth  R/ J^k$, by the Auslander--Buchsbaum formula.  Thus,
 $\dstab(I)=\dstab(J)$.

We claim that $\astab(I)=\astab(J)$. Since we have already seen that $\astab(J)=\dstab(J)$ if $\height J\geq 2$,  the claim then implies that
$\astab(I)=\dstab(I)$, as desired.

The claim follows once we have shown that $\Ass(I^k)=\Ass(f^kJ^k)=\Min(f)\union \Ass(J^k)$. For that we only need to prove the second equation. So let
$P\in \Spec R$ with $f^kJ^k\subset P$. Then $P\in \Ass(f^kJ^k)$ if and only if $\depth R_P/f^kJ^kR_P=0$. If $J\not\subset P$, then $f^kJ^kR_P=f^kR_P$, and
hence $\depth R_P/f^kJ^kR_P=0$ if and only if  $\depth R_P/f^kR_P=0$, and this is the case if and only if $P\in \Min(f)$. If $J\subset P$, then the
$R_P$-modules $f^kJ^kR_P$ and $J^kR_P$ are isomorphic, so that with the arguments as above $\depth R_P/f^kJ^kR_P=\depth R_P/J^kR_P$, which shows that in
this case $P\in \Ass(f^kJ^k)$ if and only if $P\in \Ass(J^k)$. This completes the proof.
\end{proof}

The  statements shown so far  and its proofs made for ideals in a regular local ring  are valid as well for any graded ideal in a polynomial ring.

We now turn to some explicit examples. In \cite[Proposition 1.5]{Hmst} Hibi et al show that for  any integer $t\geq 2$ the ideal
$I=(x^t,xy^{t-2}z,y^{t-1}z)\subset K[x,y,z]$ satisfies $\dstab(I)=t$. Since by Theorem~\ref{leq3}, $\astab(I)=\dstab(I)$, this example shows that in a
$3$-dimensional graded or local ring (we may pass to  $K[|x,y,z|]$) both the index of depth stability as well as the index of Ass stability may be any
given number.

\medskip
The following example shows that already for an ideal $I$ in a $3$-dimensional polynomial ring the invariants $\astab(I)$ and $\overline{\astab}(I)$ may
differ.

\begin{Example}
Let  $R=K[x,y,z]$  be a polynomial ring over a field $K$ and let $I=((xy)^2,(xz)^2,(yz)^2)\subset R$. Then
$\astab(I)=2$ and $\overline{\astab}(I)=1$.
\begin{proof}
We first claim that $I^n:(xy)^2=I^{n-1}+z^{2n}(x^2,y^2)^{n-2}$. Indeed,  let $J=((xz
)^2,(yz)^2)$. Then $I^n=J^n+(xy)^2I^{n-1}$,  and hence $I^n:(xy)^2=J^n:(xy)^2+I^{n-1}$. Since $J^n:(xy)^2=z^{2n}(x^2,y^2)^n:(xy)^2=z^{2n}(x^2,y^2)^{n-2}$,
the assertion follows.

By symmetry,  we also have $I^n:(xz)^2=I^{n-1}+y^{2n}(x^2,z^2)^{n-2}$ and $I^n:(yz)^2=I^{n-1}+x^{2n}(y^2,z^2)^{n-2}$. Thus, for all $n\geq 1$ we obtain
\begin{eqnarray*}
I^n:I&=&(I^n:(xy)^2)\sect  (I^n:(xz)^2)\sect(I^n:(yz)^2)\\
&=&(I^{n-1}+z^{2n}(x^2,y^2)^{n-2})\sect (I^{n-1}+y^{2n}(x^2,z^2)^{n-2})\sect (I^{n-1}+x^{2n}(y^2,z^2)^{n-2})\\
&=& I^{n-1}+(z^{2n}(x^2,y^2)^{n-2})\sect (y^{2n}(x^2,z^2)^{n-2})\sect (x^{2n}(y^2,z^2)^{n-2})=I^{n-1}.
\end{eqnarray*}
In other words,  $I$ satisfies strong persistence in the sense of \cite{Hq}. In particular, $\Ass(I^n)\subset \Ass(I^{n+1})$ for all $n\geq 1$. Now since
$\Ass(I)=\{(x,y),(x,z),(y,z)\}$ and $\Ass(I^2)=\{(x,y),(x,z),(y,z),(x,y,z)\}$, we deduce from this that $\astab(I)=\dstab(I)=2$.

With Macaulay2  it can be checked that  $\overline{I}=\{(xy)^2,(xz)^2,(yz)^2,xyz^2,xy^2z,x^2yz\}$ and that
$\Ass(\overline{I})=\{(x,y),(x,z),(y,z),(x,y,z)\}$. By \cite[Corollary 11.28]{M}, one has $\Ass(\overline{I})\subset \Ass(\overline{I^2})\subset \cdots
\subset \overline{\Ass}^\infty(I)$. Since $\Ass(\overline{I^n})$ is a subset of the monomial prime ideals containing $I$, and since this set is
$\{(x,y),(x,z),(y,z),(x,y,z)\}$, we see that $\Ass(\overline{I})=\Ass(\overline{I^n})$ for all $n$. Hence, $\overline{\astab}(I)=1$.
\end{proof}
\end{Example}

The next result says that the difference $\astab(I)-\overline{\astab}(I)$ may  indeed  be as big as we want.

\begin{Theorem}
Let $R=k[x,y,z]$ be the polynomial ring over a field $K$, $c$ a positive integer and $I=(x^{2c+2},xy^{2c}z,y^{2c+2}z)$.
Then $\astab(I)=c+2$ and  $\overline{\astab}(I)=2$.
\end{Theorem}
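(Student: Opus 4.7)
The plan is to prove the two equalities separately. I will pin down $\overline{\astab}(I)=2$ by a Newton-polyhedron analysis, then use the same picture to guess the required socle witness for $\astab(I)=c+2$; the combinatorial lower bound for the latter is the technically delicate step.

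The first step is a facet description of $NP(I)$. The three exponent vectors $(2c+2,0,0)$, $(1,2c,1)$, $(0,2c+2,1)$ span a triangle $T$, and translating each of its edges along the coordinate direction that stabilizes it produces three further non-coordinate facets; together with $T$ they give the inequalities
\[
2p+q+(2c+2)r\ge 4c+4,\quad p+(2c+2)r\ge 2c+2,\quad 2p+q\ge 2c+2,\quad 2cp+(2c+1)q\ge 4c^2+4c,
\]
and $x^py^qz^r\in\overline{I^n}$ iff these hold with right-hand sides multiplied by $n$. For a socle witness $m$ of $(x,y,z)\in\Ass(\overline{I^n})$, tabulating how much each $e_i$ adds to each facet's left-hand side shows that $e_2$ contributes $0$ to $p+(2c+2)r$ while $e_3$ contributes $0$ to both $2p+q$ and $2cp+(2c+1)q$. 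Consequently $m$ itself must satisfy these three sloped inequalities, and any deficit on the triangle inequality is bounded by the $e_2$-gain $1$ there. Enforcing these constraints pins $m$ down to $m_k=(k(2c+2),(n-k)(2c+2)-1,n-k)$ with $1\le k\le n-1$; the range is empty for $n=1$ (so $(x,y,z)\notin\Ass(\overline{I})$) and yields the witness $x^{2c+2}y^{2c+1}z$ for $n=2$. Persistence of associated primes for integral closures (\cite[Corollary 11.28]{M}), together with the fact that only $(x,y),(x,z),(x,y,z)$ contain $I$, concludes $\overline{\astab}(I)=2$.

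For $\astab(I)=c+2$, Theorem~\ref{leq3} reduces the claim to $\dstab(I)=c+2$, so I need to locate the first $n$ for which $(x,y,z)\in\Ass(I^n)$. With $u_1=x^{2c+2}$, $u_2=xy^{2c}z$, $u_3=y^{2c+2}z$, the generators of $I^n$ are
\[
G_{s,j}=u_1^{n-s}u_2^ju_3^{s-j}=x^{(2c+2)(n-s)+j}\,y^{(2c+2)s-2j}\,z^s,\qquad 0\le j\le s\le n.
\]
For the upper bound $\dstab(I)\le c+2$ I produce the socle element $m_1=x^{2c+2}y^{2(c+1)^2-1}z^{c+1}$ at $n=c+2$: one checks that $G_{c+1,1}\mid m_1x$, $G_{c+1,0}\mid m_1y$, and $G_{c+2,c+2}=u_2^{c+2}\mid m_1z$, while each generator of $I^{c+2}$ with $s\le c$, $s=c+1$, or $s=c+2$ fails to divide $m_1$ on the $x$-, $y$-, or $z$-exponent respectively. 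An analogous construction $x^{2c+2}y^{(n-1)(2c+2)-1}z^{n-1}$ yields a socle witness in $R/I^n$ for every $n\ge c+2$.

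For the lower bound I argue that no socle witness exists when $n\le c+1$. Assume toward a contradiction that $m=(p,q,r)$ is one, and pick generators $G_{s_i,j_i}$ dividing $m+e_i$ but not $m$. Since $m$ and $m+e_3$ agree in the $x,y$-exponents, tightness forces $s_3=r+1$, and tightness in the $x$- and $y$-exponents for the other two generators gives $(2c+2)(n-s_1)+j_1=p+1$ and $(2c+2)s_2-2j_2=q+1$. Substituting these into the remaining cross-inequalities and eliminating $j_3\in[0,r+1]$ yields the key bound $j_2\le(c+1)(s_2-1)-cr$; combined with $j_2\ge 0$ and $s_2\le r$ this forces $r\ge c+1$, hence $n\ge s_3=r+1\ge c+2$, contradicting $n\le c+1$. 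The complementary subcase $s_1>s_2$ is ruled out more quickly because the two tightness equations alone give $j_1-j_2\ge 2c+3$, incompatible with $j_1\le s_1\le r\le n-1\le c$. This sharp-threshold elimination is the main technical obstacle; once it is in place, the Newton-polyhedron-motivated witness $m_1$ completes the proof and gives $\astab(I)=\dstab(I)=c+2$.
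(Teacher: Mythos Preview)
Your argument is correct and takes a genuinely different route from the paper's.

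For $\overline{\astab}(I)=2$ the paper works algebraically: it determines $\overline{I}$ by exhibiting a candidate $J$, checking $\Ass(\overline{I}/J)=\emptyset$ via localization at the two minimal primes, and then for $n=2$ builds an auxiliary ideal $L\subseteq\overline{I^2}$, verifies $L:(x^{2c+2}y^{2c+1}z)=\mm$, and proves $x^{2c+2}y^{2c+1}z\notin\overline{I^2}$ by an induction showing $(x^{2c+2}y^{2c+1}z)^n\notin I^{2n}$. Your Newton--polyhedron computation replaces all of this: once the four facet inequalities are in hand, the observation that $e_2$ and $e_3$ contribute zero to certain facets forces any socle monomial to lie on the hyperplane $2p+q+(2c+2)r=n(4c+4)-1$ while satisfying the remaining three inequalities, which is vacuous for $n=1$ and produces the very same witness $x^{2c+2}y^{2c+1}z$ for $n=2$. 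This is cleaner and avoids the somewhat ad~hoc ideal $L$.

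For $\astab(I)=c+2$ the paper again proceeds via colon ideals: it proves $I^n:I=I^{n-1}$ (strong persistence), then $I^n:x^{2c+2}=I^{n-1}$ for $n\le c+1$, and feeds this into the exact sequence $0\to R/I^{n-1}\to R/I^n\to R/(I^n,x^{2c+2})\to 0$ to show $\depth R/I^n=1$ inductively. You instead parameterize the generators $G_{s,j}$ of $I^n$ and run a direct exponent elimination: tightness pins down $s_3=r+1$, the $y$-constraint on $G_{s_3,j_3}$ together with $j_3\le r+1$ yields $j_2\le(c+1)(s_2-1)-cr$, and $j_2\ge0$, $s_2\le r$ force $r\ge c+1$, contradicting $s_3\le n\le c+1$. (Incidentally, this inequality already suffices without the $s_1>s_2$ subcase, since its derivation uses only $G_{s_2,j_2}$ and $G_{s_3,j_3}$.) Both arguments arrive at the identical socle witness $x^{2c+2}y^{(c+1)(2c+2)-1}z^{c+1}$ for $n=c+2$.

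What each approach buys: your polyhedral method is more systematic and transparent---the facet list makes the integral-closure side essentially mechanical, and the $G_{s,j}$ elimination is a clean finite check. The paper's colon-ideal computations are heavier but yield the extra fact $I^{n}:I=I^{n-1}$ for all $n$, which is of independent interest and is exactly the hypothesis used later in Theorem~\ref{nonincreasing}.
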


\begin{proof} Note that $I=(x^{2c+2},z)\sect (x,y^{2c+2})\sect (y^{2c},x^{2c+2})$, from which it follows that $\dim R/I=\depth R/I=1.$

 In the next step we prove that $I^n:I=I^{n-1}$ for all $n$. Then \cite[Theorem 1.3]{Hq} implies that  $\Ass(I^n)\subseteq\Ass(I^{n+1})$ for all $n$. In
 particular, if $\depth(R/I^k)=0$ for some $k$, then $\depth(R/I^r)=0$ for all $r\geq 0$. Since $\depth R/I^k\leq 1$ for all $k$, it then follows that
 $\depth(R/I^k)\geq \depth(R/I^{k+1})$ for all $k$.

\medskip
In order to show that $I^n:I=I^{n-1}$,  observe that $$I^n:x^{2c+2}=I^{n-1}+((y^{2c}z)^n(x,y^2)^n:x^{2c+2})=I^{n-1}+(y^{2c}z)^n(x,y^2)^{n-2(c+1)},$$ and
that
\begin{eqnarray*}
I^n:xy^{2c}z&=&I^{n-1}+((x^{2c+2},y^{2c+2}z)^n:xy^{2c}z)\\
&\subseteq& I^{n-1}+(((x^{2c+2},y^{2c+2}z)^n:y^{2c+2}z):x^{2c+2})\\
&=&I^{n-1}+(x^{2c+2},y^{2c+2}z)^{n-2}.
\end{eqnarray*}
Similarly we have
\begin{eqnarray*}
I^n:y^{2c+2}z&=&I^{n-1}+(x^n(x^{2c+1},y^{2c}z)^n:y^{2c+2}z)\\
&\subseteq &I^{n-1}+(x^n(x^{2c+1},y^{2c}z)^n:y^{4c}z^2)\\
&=&I^{n-1}+x^n(x^{2c+1},y^{2c}z)^{n-2}.
\end{eqnarray*}
Now since
\begin{eqnarray*}
I^{n-1}&\subseteq &(I^n:I)\\
&\subseteq & I^{n-1}+(y^{2c}z)^n(x,y^2)^{n-2(c+1)}\cap(x^{2c+2},y^{2c+2}z)^{n-2}\cap x^n(x^{2n+1},y^{2c}z)^{n-2}\\
&\subseteq& I^{n-1}+I^n=I^{n-1},
\end{eqnarray*}
it follows $I^n:I=I^{n-1}$ for all $n$, as desired.

\medskip
Next we claim that $I^n:x^{2c+2}=I^{n-1}$ for all $n\leq{c+1}$.

If $n=1$, there is nothing to prove. Let $1<n\leq{c+1}$.
By a calculation as before we see that
\begin{eqnarray*}
I^n:x^{2c+2}&=&I^{n-1}+((y^{2c}z)^{n}(x,y^2)^{n}:x^{2c+2})=I^{n-1}+(y^{2c}z)^{n}\\
&=&I^{n-1}+(y^{(2c+2)(n-1)+2c+2-2n}z^n)=I^{n-1}+(y^{2c+2}z)^{n-1}y^{2c+2-2n}z\\
&=&I^{n-1}.
\end{eqnarray*}

We proceed by induction $n$ to show that $\depth S/I^n=1$ for $n\leq c+1$. We observed already that $\depth S/I=1$, Now let $1<n\leq c+1$.  Then, since
$I^n:x^{2c+2}=I^{n-1}$, we obtain the exact sequence
$$0\longrightarrow R/I^{n-1}\overset{x^{2c+2}}\longrightarrow R/I^n\longrightarrow R/(I^n,x^{2c+2})\longrightarrow 0.$$
Since by induction hypothesis $\depth R/I^{n-1}=1$,  it follows that
$$\depth R/I^n\geq\min\{\depth R/I^{n-1},\depth R/(I^n,x^{2c+2})\}=\min\{1,\depth R/(I^n,x^{2c+2})\}.$$
Note that $(I^n,x^{2c+2})=((y^{2c}z(x,y^2)^n, x^{2c+2})$, which implies that $\depth  R/(I^n,x^{2c+2})=1$. Thus we have $\depth R/I^n\geq 1$.
On the other hand,  we have seen before that    $\depth R/I^n\leq\depth R/I=1$, and so   $\depth R/I^n=1$ for all $n\leq{c+1}$.

\medskip
In the next step we show that $\depth R/I^{c+2}=0$, which then implies that $\depth R/I^{n}=0$ for all $n\geq c+2$. In  particular,  it will follows that
$\astab(I)=c+2$.

In order to prove that  $\depth R/I^{c+2}=0$, we show that $x^{2c+2}y^{(c+1)(2c+2)-1}z^{c+1}\in (I^{c+2}:\fm)\setminus I^{c+2}$.

Indeed, let $u=x^{2c+2}y^{(c+1)(2c+2)-1}z^{c+1}$. Then
\[
ux=x^{2c+2}(xy^{2c}z)(y^{2c+2}z)^cy^{2c}, \quad  uy=x^{2c+2}(y^{2c+2}z)^{c+1}
\]
and
\[  uz= (xy^{2c}z)^{c+1}(xy^{2c}z)(yx^c).
\]
This shows that $u\in (I^{c+2}:\fm).$

Assume that $x^{2c+2}y^{(c+1)(2c+2)-1}z^{c+1}\in I^{c+2}$. Then $$y^{(c+1)(2c+2)-1}z^{c+1}\in (I^{c+2}:x^{2c+2})=I^{c+1}+(y^{2c}z)^{c+2},$$
and so $y^{(c+1)(2c+2)-1}z^{c+1}\in I^{c+1}$. Since $I^{c+1}=(x^{2c+2},y^{2c}z(x,y^2))^{c+1}$, expansion of this power implies that
$y^{(c+1)(2c+2)-1}\in\sum_{i=0}^{c+1}(x^{2c+2})^i(y^{2c}(x,y^2))^{c+1-i}$. It follow that  $y^{(c+1)(2c+2)-1}\in(y^{2c}(x,y^2))^{c+1}$, which is a
contradiction.

\medskip
Now we compute $\overline{\astab}(I)$, and first prove that
\[
\overline{I}=(I,(x^3y^{2c-1}z,x^4y^{2c-2}z,\ldots,x^{2c+1}yz)).
\]

Let $J=(I,(x^3y^{2c-1}z,x^4y^{2c-2}z,\ldots,x^{2c+1}yz))$. For all integers $i$ with $3\leq i\leq{2c+1}$, we have
\begin{eqnarray*}
(x^iy^{2c-i+2}z)^{2c}&=&x^{2ic}y^{2c(2c-i+2)}z^{2c}=x^{2c(i-1)+i-2}x^{2c-i+2}y^{2c(2c-i+2)}z^{2c-i+2}z^{i-2}\\
&=&
x^{2c(i-1)+i-2}(xy^{2c}z)^{2c-i+2}z^{i-2}\\
&=&(x^{2c+2})^{i-2}(xy^{2c}z)^{2c-i+2}z^{i-2}x^{2c+2-i}\in I^{2c}.
\end{eqnarray*}
Thus $J\subseteq\overline{I}$. We have $\Ass(\overline{I}/J)\subseteq\Ass(J)$. The  primary decomposition of $J$ shows that  $\Ass(J)=\{(x,z),(x,y)\}$. Let
$P=(x,z)$. Then $(\overline{I})_P=\overline{(I_P)}=\overline{(x^{2c+2}, z)_P}=(x^{2c+2},z)_P$.  The last equality follows by \cite[Proposition 1.3.5]{Hs},
and so $(\overline{I}/J)_{P}=0$. Hence
$P\notin\Ass(\overline{I}/J)$. Now let $P=(x,y)$. Then
\begin{eqnarray*}
(\overline{I})_P&=&\overline{(x^{2c+2},xy^{2c},y^{2c+2})_P}\subset \overline{((x,y)^{2c+2},xy^{2c})_P}=((x,y)^{2c+2},xy^{2c})_{P}=J_{P}.
\end{eqnarray*}
The second  equality follows by \cite[Exercise 1.19]{Hs}.  Thus we have $(\overline{I}/J)_P=0$. This shows that $\Ass(\overline{I}/J)=\emptyset$, and hence
$\overline{I}=J$, as desired. In particular we see that $$\Ass(\overline{I})=\{(x,z),(x,y)\}.$$

Since $\Ass(\overline{I})\subseteq \Ass(\overline{I^k})$ for all $k$, it follows that $\{(x,z),(x,y)\}\subset \Ass(I^k)$ for all $k$. Suppose that
$(y,z)\in \Ass(\overline{I^k})$ for some $k$.  Then $(y,z)$ is a minimal prime ideal of $I$. However, this is not the case, as can be seen from the primary
decomposition of $I$.

Next we show that $\mm=(x,y,z)$ belongs to $\Ass(\overline{I^2})$.  Then it follows that
\[
\Ass(\overline{I^k})=\{(x,z),(x,y), (x,y,z)\} \quad \text{for all}\quad k\geq  2,
\]
thereby showing that $\overline{\astab}(I)=2$.

\medskip
In order to prove that $\mm\in \Ass(\overline{I^2})$, we first show that the ideal
\begin{eqnarray*}
L&=&(I^2,(x^4y^{4c-1}z^2,x^5y^{4c-2}z^2,\ldots,x^{2c+2}y^{2c+1}z^2),\\
&&(x^{2c+5}y^{2c-1}z,x^{2c+6}y^{2c-2}z\ldots,x^{4c+3}yz)).
\end{eqnarray*}
is contained in $\overline{I^2}$.

Since  $$I^2=(x^{4c+4},x^2y^{4c}z^2,y^{4c+4}z^2,x^{2c+3}y^{2c}z,x^{2c+2}y^{2c+2}z,xy^{4c+2}z^2),$$
it follows that for all integers $i$ with  $4\leq i\leq{2c+2}$ the element
\begin{eqnarray*}
(x^iy^{4c-i+3}z^2)^{4c}x^{4ic}y^{4c(4c-i+3)}z^{8c}=x^{2(4c-i+3)}y^{4c(4c-i+3)}z^{2(4c-i+3)}x^{4c(i-2)+2i-6}z^{2i-6}\\
=(x^2y^{4c}z^2)^{4c-i+3}(x^{4c+4})^{i-3}x^{4c-2i+6}z^{2i-6}\hspace{6.7cm}
\end{eqnarray*}
belongs to $(I^2)^{4c}$. Also, for all integers $i$ with $5\leq i\leq{2c-2}$, the element
\begin{eqnarray*}
(x^{2c+i}y^{2c+4-i}z)^{4c}&=&x^{2(2c+4-i)}y^{4c(2c+4-i)}z^{2(2c+4-i)}x^{8c^2+4ic+2i-4c-8}z^{2i-8}\\
&=&(x^2y^{4c}z^2)^{2c+4-i}x^{(4c+4)(2c+i-4)}x^{4c+8-2i}z^{2i-8}\\
&=&(x^2y^{4c}z^2)^{2c+4-i}(x^{4c+4})^{2c+i-4}x^{4c+8-2i}z^{2i-8}
\end{eqnarray*}
belongs to $(I^2)^{4c}$,   This shows $L\subseteq\overline{I^2}$.

By using primary decomposition for the ideal $L$, we see that  $$\Ass(L)=\{(x,z),(x,y),(x,y,z)\}.$$
 On the other hand, by easy calculation, one verifies that
$L:(x^{2c+2}y^{2c+1}z)=\fm$. Finally we show that  $x^{2c+2}y^{2c+1}z\notin \overline{I^2}$,  which then implies that $\mm\in \Ass(\overline{I^2})$, as desired.

In order to prove  this we show by induction on $n$ that  $(x^{2c+2}y^{2c+1}z)^n\notin (I^2)^n$ for all $n$.
Indeed,
let $n=1$,  and assume that $x^{2c+2}y^{2c+1}z\in I^2$. Then $y^{2c+1}z\in I^2:x^{2c+2}=I+(y^{2c}z)^2=I$,  and this is
contradiction.

Now let $n>1$. By induction hypothesis we may assume that  $(x^{2c+2}y^{2c+1}z)^{n-1}\notin (I^2)^{n-1}$.
If $(x^{2c+2}y^{2c+1}z)^n\in (I^2)^n$, then $x^{(2c+2)(n-1)}(y^{2c+1}z)^n\in
(I^{2n}:x^{2c+2})=I^{2n-1}+(y^{2c}z)^{2n}(x,y^2)^{2n-2(c+1)}$,  and so
$x^{(2c+2)(n-1)}(y^{2c+1}z)^n\in I^{2n-1}$.

It follows that  $x^{(2c+2)(n-1)}(y^{2c+1}z)^{n-1}\in (I^{2n-1}:y^{2c+1}z)$. Since
\begin{eqnarray*}
(I^{2n-1}:y^{2c+1}z)&=&
yI^{2n-2}+((x^{2c+2},xy^{2c}z)^{2n-1}:y^{2c+1}z)\\
&=&
yI^{2n-2}+(x^{2n-1}(x^{2c+1},y^{2c}z)^{2n-2}:y)\\
&=&yI^{2n-2}+x^{2n-1}(y^{2c-1}z(x^{2c+1},y^{2c}z)^{2n-3}+(x^{2c+1})^{2n-2}),
\end{eqnarray*}
we see that
$x^{(2c+2)(n-1)}(y^{2c+1}z)^{n-1}\in y(I^2)^{n-1}$,  a contradiction.

Thus $(x^{2c+2}y^{2c+1}z)^n\notin (I^2)^n$ for all
$n$, as desired.
\end{proof}

The theorem says that for any positive integer $c$ there exists a monomial ideal in $K[x,y,z]$ with $\astab(I)-\overline{\astab}(I)=c$. However we do not
know whether for all ideals in $I\subset K[x,y,z]$ one has  $\overline{\astab}(I)\leq \astab(I)$.

\section{The case $\dim R>3$}

The purpose of this section is to show that for  a polynomial ring $S$ in more than  $3$ variables, for a graded ideal $I\subset S$ the invariants $\astab(I)$  and $\dstab(I)$   may differ
by any amount.

\medskip
We begin with two examples.

\begin{Example}
Let $R=k[x,y,z,u]$ be the polynomial ring over a field $k$ and consider the ideal $I=(xy,yz,zu)$
of $R$. Then $\astab(I)=1$ and $\dstab(I)=2$.
\end{Example}
\begin{proof}
We have $\Ass(I)=\Min(I)$, and since I may be viewed as the edge ideal of a bipartite graph it follows from  \cite[Definition 1.4.5, Corollary
10.3.17]{Hh} that $\Ass(I)=\Ass(I^n)$ for all $n\in\mathbb{N}$. Therefore $\astab(I)=1$. By \cite[Corollary 10.3.18]{Hh},
$\lim_{k\to\infty}\depth R/I^k=1$. Moreover, it can be seen that $\depth R/I=2$ and  $\depth R/I^2=1$. Since $I$ has a linear resolution,
\cite[Theorem 10.2.6]{Hh} implies  that for all $k\geq 1$, $I^k$ has a linear resolution as well. Therefore,  by \cite[Proposition 2.2]{Hrv} we have
$\depth R/I^{k+1}\leq\depth R/I^k$ for all $k\in\mathbb{N}$. Hence $\depth R/I^k=1$ for all $k\geq 2$, and so $\dstab(I)=2$.
\end{proof}

\begin{Example}
Let $R=K[x,y,z,u]$ be the polynomial ring in 4 variables  over a field $K$, and let $I=(x^2z,uyz,u^3)$. Then $\astab(I)=2$ and $\dstab(I)=1$.
\end{Example}

\begin{proof}
Set $J=(uyz,u^3)$ and so for all $n\in\mathbb{N}$, it therefore follows that $I^n: x^2z=(J^n+x^2zI^{n-1}): x^2z=I^{n-1}+(J^n:x^2z)=I^{n-1}$. Hence, for all
$n\in\mathbb{N}$, $\Ass(I^{n})\subseteq\Ass(I^{n+1})$. By using Macaulay2 \cite{Gs} and the program \cite{Bhr}, we see that
$\Ass^{\infty}(I)=\Ass(I^2)=\{(x,u),(z,u),(x,y,u),(x,z,u)\}$. Therefore $\astab(I)=2$. As $\Ass(I^{n})\subseteq\Ass(I^{n+1})$ for all $n\in\mathbb{N}$, it follows that 
$\fm=(x,y,z,u)\notin\Ass(I^n)$ and so we have $\depth R/I^n\geq 1$. $y-z\in\fm$ is a nonzerodivisor on $R/I^n$ for all $n\in\mathbb{N}$. Set
$\overline{R}=R/(y-z)$. Thus by \cite[Lemma 4.2.16]{Bh} we have $\overline{(R/I^n)}=\overline{R}/{\overline{I^n}}\cong K[x,z,u]/(x^2z,uz^2,u^3)^n$.
Since $xzu^{3n-1}\in(\overline{I^n}):\overline{\fm}\setminus\overline{I^n}$, it follows $\depth\overline{R}/{\overline{I^n}}=0$ and so $\depth
R/I^n=1$ for all $n\in\mathbb{N}$. Therefore
$\dstab(I)=1$.
\end{proof}

Now we come to the main result of this section

\begin{Theorem}
Let $R=k[x,y,z,u]$ be the polynomial ring over a field $k$. Then for any non-negative integer $c$, there exist two ideals $I$ and $J$ of $R$ such that the
following statements hold:
\begin{itemize}
\item[(i)] $\astab(I)-\dstab(I)\geq c$.
\item[(ii)]  $\dstab(J)-\astab(J)\geq c$.
\end{itemize}
\end{Theorem}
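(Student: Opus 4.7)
The plan is to prove (i) and (ii) by exhibiting two explicit families of monomial ideals $I_c$ and $J_c$ in $R=k[x,y,z,u]$, built by ``inflating'' the exponents in the two preceding examples and then adapting the colon-ideal bookkeeping developed in the proofs of Theorem~\ref{leq3} and the main theorem of Section~1.

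For part (i), I start from $I=(x^2z,uyz,u^3)$ of Example 2.2, which already realizes $\astab-\dstab=1$, and propose a family in the spirit of $I_c=(x^{c+2}z,uyz,u^{c+2})$ (or a close variant). First I would check that the strong persistence argument from Example 2.2 carries over, i.e.\ that $I_c^n:x^{c+2}z=I_c^{n-1}$, which by \cite[Theorem 1.3]{Hq} forces $\Ass(I_c^n)\subseteq\Ass(I_c^{n+1})$. I would then exhibit, for each $1\le n\le c+1$, an explicit monomial whose colon with $\fm$-type primes shows that a new associated prime appears at step $n+1$, yielding $\astab(I_c)\ge c+2$. For the depth side, I would mimic the nonzerodivisor reduction used in Example 2.2 (passing to $R/(y-z)$) so that the quotient becomes a $3$-variable monomial ideal whose depth is constant (equal to $1$) for every power; this gives $\dstab(I_c)=1$, and hence $\astab(I_c)-\dstab(I_c)\ge c$.

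For part (ii), the starting point is the bipartite-graph ideal $I=(xy,yz,zu)$ of Example 2.1, whose associated primes are minimal by \cite[Corollary 10.3.17]{Hh}, so $\astab=1$, while $\dstab=2$. I would propose a family $J_c$ of bipartite-type (or more generally, of ideals of König type) for which the same corollary still applies to force $\Ass(J_c^n)=\Min(J_c)$ for all $n$, so $\astab(J_c)=1$, but whose powers admit a linear resolution and whose depth decreases step by step for $c+1$ rounds before stabilizing. A natural candidate is to keep the path shape of the edges but replace one vertex by a high power (e.g.\ replace $zu$ by $z u^{c+1}$, or work with a weighted path ideal), then apply \cite[Theorem 10.2.6]{Hh} to maintain linear resolutions of all powers, and use \cite[Proposition 2.2]{Hrv} to deduce that $\depth R/J_c^k$ is non-increasing. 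Finally I would locate an explicit socle element in $R/J_c^{c+1}$ (and show none exists in earlier powers) to pin down $\dstab(J_c)=c+1$.

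The main obstacle in both parts is the depth computation: detecting the exact step at which $\fm$ enters (part (i)) or has to enter (part (ii)) the associated primes. In (i) this is postponed to a clean reduction modulo a linear form so that the $4$-variable depth problem collapses to a $3$-variable one already well understood; in (ii) it requires a precise socle search inside $R/J_c^{c+1}$, analogous to the element $x^{2c+2}y^{(c+1)(2c+2)-1}z^{c+1}$ used in the proof of Theorem~1.4. The persistence inclusions $\Ass(I_c^n)\subseteq\Ass(I_c^{n+1})$ and the strong-persistence identity $I_c^n:I_c=I_c^{n-1}$ are the common engine that turns each of these local checks into a monotone statement about the whole chain of powers.
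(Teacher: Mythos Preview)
Your plan contains a structural error that makes both constructions fail for large $c$. You assume that a large stability index arises from \emph{many successive changes}: in (i), a new associated prime at each of the steps $2,\dots,c+2$; in (ii), a strict depth drop at each of $c+1$ consecutive powers. Neither is possible in $R=k[x,y,z,u]$. The associated primes of a monomial ideal are monomial primes, and there are only finitely many of these containing $I_c$ (independent of $c$), so once persistence holds you cannot gain a new one $c+1$ times. Likewise $\depth R/J_c^n\le\dim R/J_c\le 3$, so a strictly decreasing depth sequence has length at most $3$. The correct mechanism, which the paper uses, is a \emph{single late change}: for $I=(x^{c+1}z^{c},u^{2c-1}yz,u^{2c+1})$ one specific prime $\fp=(x,z,u)$ satisfies $\fp\notin\Ass(I^n)$ for $n\le c$ and $\fp\in\Ass(I^{c+1})$; for $J=(x^{c}y^{c-1},x^{c-1}y^{c-1}z,z^{c}u^{c})$ the depth stays at $2$ for $n\le c$ and drops once to $1$ at $n=c+1$.

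There is also an internal contradiction in your plan for (ii). You want $\Ass(J_c^n)=\Min(J_c)$ for all $n$, hence $\mm\notin\Ass(J_c^n)$, yet you propose to pin down $\dstab$ by exhibiting a socle element in $R/J_c^{c+1}$, i.e.\ an element annihilated by $\mm$ --- which would force $\mm\in\Ass(J_c^{c+1})$. The paper avoids this by first passing to $\overline R=R/(x-y)$ (a linear nonzerodivisor on all $R/J^n$) and only then looking for a socle element in the $3$-variable quotient $\overline R/\overline{J^n}$; this witnesses a drop of $\depth R/J^n$ from $2$ to $1$, not from $1$ to $0$. Your reduction-modulo-a-linear-form idea in (i) is exactly right and is what the paper does there (with $x-y-z$); you need the same device in (ii), applied \emph{before} the socle search, and you must look for the late appearance of a single phenomenon rather than a long cascade.
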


\begin{proof}
We may assume that $c$ is a positive integer. Let  $I=(x^{c+1}z^{c},u^{2c-1}yz,u^{2c+1})$ and $J=(x^{c}y^{c-1},y^{c-1}x^{c-1}z,z^{c}u^{c})$. We claim that
${\astab(I)=\dstab(J)=c+1}$ and ${\astab(J)=\dstab(I)=1}$.

(i) In this case, by using Example 2.2, we can assume that $c\geq 2$.  For all $n\in\mathbb{N}$, we have
$(I^n:x^{c+1}z^c)=(((u^{2c-1}yz,u^{2c+1})^n+x^{c+1}z^cI^{n-1}):x^{c+1}z^c)=I^{n-1}+((u^{2c-1}yz,u^{2c+1})^n:x^{c+1}z^c)$. Since
$((u^{2c-1}yz,u^{2c+1})^n:x^{c+1}z^c)=((u^{2c-1}yz,u^{2c+1})^n:z^c)\subseteq I^{n-1}$, it follows  $(I^n:x^{c+1}z^c)=I^{n-1}$ and so
$\Ass(I^{n})\subseteq\Ass(I^{n+1})$.
By using Macaulay2 \cite{Gs} and \cite{Bhr}, we have $\Ass(I)=\{(x,u),(z,u),(y,z,u),(x,y,u)\}$
 and  $\Ass^{\infty}(I)=\{(x,u),(z,u),(y,z,u),(x,z,u),(x,y,u)\}$. Set ${\fp=(x,z,u)}$. It is easily seen that $I^i:\fp=I^i$ for all ${i\leq c}$ and
 $x^cy^{c+1}z^cu^{(2c+1)c}\in (I_{\fp}^{c+1}:\fp)\setminus I_{\fp}^{c+1}$. Hence $\Ass(I)=\Ass(I^2)=...=\Ass(I^c)$,
 $\Ass(I^{c+1})=\Ass^{\infty}(I)$ and so ${\astab(I)=c+1}$. By the same argument as used in the proof of Example 2.2, that
 $\fm=(x,y,z,u)\notin\Ass(I^n)$ for all $n\in\mathbb{N}$ and so we have $\depth R/I^n\geq 1$ and $x-y-z\in\fm$ is a nonzerodivisor on $R/I^n$ for all $n\in\mathbb{N}$. Therefore
 $\overline{(R/I^n)}=\overline{R}/{\overline{I^n}}\cong K[y,z,u]/((y+z)^{c+1}z^c,u^{2c-1}yz,u^{2c+1})^n$, where $\overline{R}=R/(x-y-z)$. Since
 $z^{2c}u^{(2c+1)n-1}\in(\overline{I^n}):\overline{\fm}\setminus\overline{I^n}$,  it follows $\depth\overline{R}/{\overline{I^n}}=0$ and so $\depth
 R/I^n=1$ for all $n\in\mathbb{N}$. Therefore
$\dstab(I)=1$.

(ii) For all $n\in\mathbb{N}$, we have
$(J^n:z^cu^c)=(((x^{c}y^{c-1},y^{c-1}x^{c-1}z)^n+z^cu^cJ^{n-1}):z^cu^c)=J^{n-1}+((x^{c}y^{c-1},y^{c-1}x^{c-1}z)^n:z^cu^c)=J^{n-1}+((x^{c}y^{c-1},y^{c-1}x^{c-1}z)^n:z^c)$.
Since $((x^{c}y^{c-1},y^{c-1}x^{c-1}z)^n:z^c)\subseteq J^{n-1}$, for all $n\in\mathbb{N}$ we have $(J^n:z^cu^c)=J^{n-1}$. Therefore, for all
$n\in\mathbb{N}$, $\Ass(J^n)\subseteq\Ass(J^{n+1})$. By using \cite{Gs} and \cite{Bhr} we have
$\Ass^{\infty}(J)=\{(x,z),(x,u),(y,z),(y,u)\}=\Min(J)$ and so $\astab(J)=1$. Since $\fm\notin\Ass(J^n)$ for all $n\in\mathbb{N}$, we have $2=\dim
R/J\geq\depth R/J^n\geq 1$  and $x-y\in\fm$ is a nonzerodivisor on $R/J^n$ for all $n\in\mathbb{N}$. Again by the above argument,
$\overline{(R/J^n)}=\overline{R}/{\overline{J^n}}\cong K[x,z,u]/(x^{2c-1},x^{2c-2}z,z^cu^c)^n$, where $\overline{R}=R/(x-y)$. Since
$\overline{J^i}:\overline{\fm}=\overline{J^i}$ for all $i\leq c$ and $x^{(2c-1)n}z^{n-1}u^{c-1}\in\overline{J^n}:\overline{\fm}\setminus\overline{J^n}$ for
all ${n\geq c+1}$.
It therefore follows $\depth R/J=\depth R/J^2=...=\depth R/J^c=2$ and $\depth R/J^n=1$ for all ${n\geq c+1}$. Hence ${\dstab(J)=c+1}$.
\end{proof}

\section{Non-increasing depth functions}

\begin{Theorem}\label{nonincreasing}
Let $(R,\mm)$ be a regular local ring with $\dim R=3$ and $I$ be an ideal of $R$. If  $I^{n+1}:I=I^n$ for all $n\in\mathbb{N}$, then $\depth R/I^n$ is
non-increasing.
\end{Theorem}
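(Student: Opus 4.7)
The plan is to combine the persistence-of-associated-primes consequence of the strong persistence hypothesis with a case analysis on $\height I$. By \cite[Theorem 1.3]{Hq}, the assumption $I^{n+1}:I=I^n$ forces $\Ass(I^n)\subseteq\Ass(I^{n+1})$ for every $n$, and my goal is to show that the range of $\depth R/I^n$ is small enough that this inclusion forces monotonicity.

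First I would dispose of the easy cases. If $I$ is principal (or the zero or unit ideal), then $\depth R/I^n$ is constant, so there is nothing to prove. If $\height I=3$, then $I$ is $\mm$-primary, so $\mm\in\Ass(I^n)$ for all $n$ and $\depth R/I^n=0$ is again constant. If $\height I=2$, then $\dim R/I^n=\dim R/I=1$, hence $\depth R/I^n\in\{0,1\}$; and once $\depth R/I^{n_0}=0$, i.e.\ $\mm\in\Ass(I^{n_0})$, the persistence of associated primes forces $\mm\in\Ass(I^n)$ for every $n\geq n_0$, so $\depth R/I^n=0$ there. In each case the sequence is non-increasing.

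The remaining case $\height I=1$ reduces to the previous ones via the factorization $I=fJ$ with $\height J\geq 2$ recalled in the lead-in to Theorem~\ref{leq3}. As in the proof of that theorem, $I^n\cong J^n$ as $R$-modules and the Auslander--Buchsbaum formula gives $\depth R/I^n=\depth R/J^n$ for all $n$. It then suffices to transfer strong persistence to $J$: given $r\in J^{n+1}:J$, one has $f^nr\cdot I=f^{n+1}rJ\subseteq f^{n+1}J^{n+1}=I^{n+1}$, hence $f^nr\in I^{n+1}:I=I^n=f^nJ^n$, and cancelling $f^n$ in the domain $R$ yields $r\in J^n$. Thus $J$ satisfies strong persistence, $\height J\in\{2,3\}$, and the previous paragraph applied to $J$ concludes the proof.

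The only mildly technical point I expect is this transfer of strong persistence from $I$ to $J$, but it is a one-line computation in the UFD $R$. Everything else is a direct assembly of the dimension bound $\dim R=3$, the factorization lemma of Section~1, and the persistence of associated primes coming from strong persistence.
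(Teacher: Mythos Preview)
Your proof is correct and follows the same architecture as the paper's: split by $\height I$, use persistence of associated primes from \cite[Theorem~1.3]{Hq} to handle $\height I\geq 2$, and in the case $\height I=1$ factor $I=fJ$ with $\height J\geq 2$, transfer strong persistence to $J$, and reduce to the previous case. In fact you supply details the paper leaves implicit---the paper merely asserts that $I^{n+1}:I=I^n$ implies $\depth R/I^{n+1}\leq\depth R/I^n$ when $\height I\geq 2$ and that $J^{n+1}:J=J^n$ follows from $I^{n+1}:I=I^n$, whereas you spell out the $\{0,1\}$-valued depth argument and the one-line cancellation in the UFD $R$.
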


\begin{proof}
Suppose $\height(I)\geq 2$. Since $I^{n+1}:I=I^n$ for all $n\in\mathbb{N}$,  it follows that  $\depth R/I^{n+1}\leq\depth R/I^n$.
Now, let $\height(I)=1$. Then there exists an ideal $J$ of $R$ and an element $f\in R$ such that $I=f{J}$ and $\height(J)\geq 2$. As in the proof of
Theorem~\ref{leq3}, $\depth R/I^n=\depth R/J^n$ for all $n\in\mathbb{N}$. Since $I^{n+1}:I=I^{n}$ for all $n\in\mathbb{N}$, we have $J^{n+1}:J=J^n$. Thus
$\depth R/J^{n+1}\leq\depth R/J^n$ and so $\depth R/I^{n+1}\leq\depth R/I^n$. This completes the proof.
\end{proof}

\begin{Corollary}
\begin{itemize}
\item[(i)] Let $(R,\mm)$ be regular local ring  with $\dim R=3$. Then $\depth R/\overline{I^n}$ is non-increasing.
\item[(ii)] Let $R=k[x,y,z]$ be a polynomial ring in $3$ indeterminates over a field $k$. If $I$ is an edge ideal of $R$, then  $\depth R/I^n$ is non-increasing.
\end{itemize}
\end{Corollary}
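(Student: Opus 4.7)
The plan is to derive both parts from Theorem~\ref{nonincreasing}, either directly for (ii) or by replaying its proof with the integral-closure filtration for (i).

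For (i), the key observation is that the proof of Theorem~\ref{nonincreasing} uses strong persistence only through its consequence $\Ass(R/I^n)\subseteq\Ass(R/I^{n+1})$, and the analogous inclusion $\Ass(R/\overline{I^n})\subseteq\Ass(R/\overline{I^{n+1}})$ for the integral-closure filtration is already cited in the paper from \cite[Corollary~11.28]{M}. First I would reduce to the case $\height I\geq 2$: if $\height I=1$, write $I=fJ$ with $\height J\geq 2$ and verify $\overline{I^n}=f^n\,\overline{J^n}$. The nontrivial inclusion $\overline{I^n}\subseteq f^n\,\overline{J^n}$ uses that $\overline{I^n}\subseteq \overline{(f^n)}=(f^n)$ in the normal domain $R$, so every $a\in\overline{I^n}$ has the form $a=f^n b$; dividing an integral equation for $a$ over $f^n J^n$ by $f^{nm}$ produces an integral equation for $b$ over $J^n$. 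Consequently $\overline{I^n}\iso\overline{J^n}$ as $R$-modules, and the Auslander--Buchsbaum formula gives $\depth R/\overline{I^n}=\depth R/\overline{J^n}$, reducing us to the case $\height I\geq 2$. In that case $\dim R/\overline{I^n}=\dim R/\sqrt{I}\leq 1$, so $\depth R/\overline{I^n}\in\{0,1\}$; non-monotonicity can fail only when $\depth R/\overline{I^n}=0$ while $\depth R/\overline{I^{n+1}}\geq 1$, which is ruled out by $\mm\in\Ass(R/\overline{I^n})\subseteq\Ass(R/\overline{I^{n+1}})$.

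For (ii), I would apply Theorem~\ref{nonincreasing} (or its obvious graded-polynomial-ring counterpart, equivalently after localizing at $\mm=(x,y,z)$) to $R=k[x,y,z]$ and $I$ an edge ideal, after verifying the hypothesis $I^{n+1}:I=I^n$. This strong persistence is a known property of edge ideals of finite simple graphs; one can also check it here by direct case analysis, since up to isomorphism there are only four graphs on three vertices (empty, a single edge, a path, and the triangle $K_3$). The principal and height-one cases $I=(xy)$, $I=x(y,z)$, $I=y(x,z)$ reduce to powers of a regular sequence, and for the triangle $I=(xy,xz,yz)$ a short monomial colon-computation shows $I^{n+1}:I=I^n$.

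The main technical point is the identity $\overline{I^n}=f^n\,\overline{J^n}$ used in the height-one reduction of (i); beyond this, both parts either mirror the proof of Theorem~\ref{nonincreasing} verbatim or come down to elementary monomial verifications.
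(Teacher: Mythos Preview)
Your proposal is correct and is precisely the intended argument: the paper states the Corollary without proof, expecting the reader to deduce (i) by rerunning the proof of Theorem~\ref{nonincreasing} for the integral-closure filtration (using the inclusion $\Ass(R/\overline{I^n})\subseteq\Ass(R/\overline{I^{n+1}})$ already cited from \cite[Corollary~11.28]{M}), and (ii) by applying Theorem~\ref{nonincreasing} directly once strong persistence of edge ideals is noted. The height-one reduction $\overline{I^n}=f^n\,\overline{J^n}$ that you single out is exactly the identity the paper invokes without comment in Remark~\ref{r}.
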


\begin{Example}{\em
Let $R=k[x,y,z,u]$ be a polynomial ring and $I=(xy^2z,yz^2u,zu^2(x+y+z+u),xu(x+y+z+u)^2,x^2y(x+y+z+u))$
be an ideal of $R$. Then $\depth R/I=\depth R/I^4=0$, $\depth R/I^2=\depth R/I^3=1$. Thus the depth function is neither  non-increasing nor
non-decreasing.}
\end{Example}

In view of Theorem~\ref{nonincreasing} one may ask whether  in a regular local ring (of any dimension), $\depth R/I^n$ is a non-increasing function of $n$,  if $I^{n+1}:I=I^n$ for all $n$.

\subsection*{Acknowledgements}
The second author would like to thank Universit\"at Duisburg-Essen,  especially to the Department
of Mathematics for its hospitality during the preparation of this work. 

%%%%%%%%%%%%%%%%%%%%%%%%%%%%%%%%%%%%%%%%%%%%%%%%%%%%%%%%%%%%%%%%%%%%%%%%%%%%%

\end{document}